\documentclass[a4paper, 11pt]{amsproc}
\usepackage{amsmath}    
\usepackage{amsthm}     
\usepackage{amssymb}    
\usepackage{mathtools}  
\usepackage{amstext}    

\usepackage{mathrsfs}   
\usepackage{stmaryrd}   
\usepackage{bm}         

\usepackage{amsrefs}    
\usepackage{ascmac}     
\usepackage{comment}    
\usepackage{here}       
\usepackage{time}       
\usepackage{fullpage}   
\usepackage{xcolor}     

\usepackage[utf8]{inputenc} 

\usepackage{hyperref}   
\hypersetup{
 colorlinks=true,       
 linkcolor=blue,        
 citecolor=blue,        
 filecolor=blue,        
 urlcolor=blue          
}

\usepackage[capitalize,nameinlink,noabbrev,nosort]{cleveref}

\DeclareFontEncoding{OT2}{}{}
\DeclareFontSubstitution{OT2}{cmr}{m}{l}
\DeclareFontFamily{OT2}{cmr}{\hyphenchar\font45 }
\DeclareFontShape{OT2}{cmr}{m}{l}{%
  <5><6><7><8><9>gen*wncyr%
  <10><10.95><12><14.4><17.28><20.74><24.88>wncyr10}{}
\DeclareMathAlphabet{\mathcyr}{OT2}{cmr}{m}{l}
\DeclareMathAlphabet{\mathcyb}{OT2}{cmr}{b}{l}
\SetMathAlphabet{\mathcyr}{bold}{OT2}{cmr}{b}{l}

\makeatletter
\@namedef{subjclassname@2020}{%
  \textup{2020} Mathematics Subject Classification}
\makeatother

\newtheorem{theoremcounter}{Theorem Counter}[section]
\theoremstyle{definition}

\newtheorem{remark}[theoremcounter]{Remark}

\theoremstyle{plain}
\newtheorem{lemma}[theoremcounter]{Lemma}

\newtheorem{corollary}[theoremcounter]{Corollary}

\newtheorem{theorem}[theoremcounter]{Theorem}

\numberwithin{equation}{section}

\allowdisplaybreaks[1]

\begin{document}

\title{A note on order estimates of the $q$-analogue of the Riemann zeta function}

\author{Hideki Murahara}
\address[Hideki Murahara]{The University of Kitakyushu,  4-2-1 Kitagata, Kokuraminami-ku, Kitakyushu, Fukuoka, 802-8577, Japan}
\email{hmurahara@mathformula.page}

\author{Tomokazu Onozuka}
\address[Tomokazu Onozuka]{Division of Mathematical Sciences, Department of Integrated Science and Technology, Faculty of Science and Technology, Oita University, 700 Dannoharu, Oita, 870-1192, Japan} 
\email{t-onozuka@oita-u.ac.jp}

\subjclass[2020]{Primary 11M41}

\begin{abstract}
 At the first step of studying order estimates for the $q$-analogue of the Riemann zeta function, we estimate bounds for it on vertical lines for a fixed parameter $q$.
\end{abstract}

\keywords{Riemann zeta function, $q$-analogue of the Riemann zeta function, Lindel\"of hypothesis}

\maketitle

\section{Introduction}
For parameter $q\in(0,1)$ and complex variables $s$ and $t$, Kaneko, Kurokawa, and Wakayama \cite{KanekoKurokawaWakayama2003} defined the $q$-analogue of the Riemann zeta function
\begin{gather*}
 \zeta_q(s,t)
 \coloneqq\sum_{m=1}^\infty \frac{q^{mt}}{[m]_q^s},
\end{gather*}
where $[m]_{q}\coloneqq\frac{1-q^{m}}{1-q}$ is the $q$-integer.
They proved that this series converges absolutely for $s\in \mathbb{C}$ and $\Re(t)>0$, and that $\zeta_q(s,t)$ can be meromorphically continued to $\mathbb{C}^2$. 
All poles of $\zeta_q(s,t)$ are simple and are located at $t \in \mathbb{Z}_{\le 0}+2\pi i \mathbb{Z}/\log q\coloneqq\{ a+2\pi i b/\log q \mid a,b\in\mathbb{Z},a\le0 \}$.
Additionally, they established that $\lim_{q \nearrow 1} \zeta_q(s)=\zeta(s)$ for $s \in \mathbb{C} \setminus \{1\}$, where $\zeta_q(s)$ is defined as $\zeta_q(s, s-1)$.
All poles of $\zeta_q(s)$ are simple and are located at $s \in 1+2\pi i \mathbb{Z}/\log q$ and $s \in \mathbb{Z}_{\le 0}+2\pi i \mathbb{Z}_{\neq0}/\log q$.

In this paper, we investigate the growth of $\zeta_q(s,t)$ and $\zeta_q(s)$ as $|v| \to \infty$, where $s=\sigma+iv$ and $q$, $\sigma$, and $\Re(t)$ are fixed.
The main result is as follows.
\begin{theorem}\label{main}
Let $\varepsilon>0$, $q\in (0,1)$, and let $s=\sigma+iv$ and $t$ be complex numbers.
Fix $\varepsilon$, $q$, $\sigma$, and $\Re(t)$. 
Assume that $t$ satisfies $\inf_{r\in \mathbb{Z}_{\ge 0}} |1-q^{t+r}|>\varepsilon$. 
Then, for sufficiently large $|v|$, we have 
\begin{align*}
 \zeta_q(s,t)
 =
 \begin{cases}
  O_{}\left( 1 \right)&(\Re(t)>0),\\
  O_{}\left( |v| \right)&(\Re(t)=0),\\
  O_{}\left( 
   \exp\left(-\Re(t)(1+\pi/2) \cdot |v|\right) 
  \right)
  &(\Re(t)<0).
 \end{cases}
\end{align*}
Here the implicit constants depend on $\epsilon$, $q$, $\sigma$, and $\Re(t)$.
\end{theorem}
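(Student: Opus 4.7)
The plan is a case analysis on the sign of $\Re(t)$, unified by the meromorphic continuation formula obtained by expanding $(1-q^m)^{-s}$ via the generalized binomial theorem in the defining series and interchanging the two summations:
\[
  \zeta_q(s,t)=(1-q)^s\sum_{k=0}^{\infty}\binom{s+k-1}{k}\,\frac{q^{t+k}}{1-q^{t+k}}.
\]
The hypothesis $\inf_{r\ge 0}|1-q^{t+r}|>\varepsilon$ renders the right-hand side absolutely convergent for every $t$ in question (the factor $q^{t+k}/(1-q^{t+k})$ is uniformly bounded in $k$ and decays like $q^{\Re(t)+k}$ once $\Re(t+k)>0$), so this formula supplies the meromorphic continuation beyond $\Re(t)>0$ and is the main tool. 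For $\Re(t)>0$ itself the defining series already converges absolutely; since $[m]_q\in[1,(1-q)^{-1})$ is uniformly bounded, $|\zeta_q(s,t)|\le\sum_{m\ge 1}q^{m\Re(t)}[m]_q^{-\sigma}=O_{q,\sigma,\Re(t)}(1)$ uniformly in $v$.

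\emph{Case $\Re(t)=0$.} The continuation formula would only yield an exponential estimate here, so I would instead apply Abel summation with $A_m:=\sum_{k=1}^{m}q^{kt}$; the hypothesis $|1-q^t|>\varepsilon$ gives $|A_m|\le 2/\varepsilon$ uniformly in $m$. The Abel identity, first derived in $\Re(t)>0$ and extended by analytic continuation in $t$, reads
\[
  \zeta_q(s,t)=\frac{q^t(1-q)^s}{1-q^t}+\sum_{m\ge 1}A_m\bigl([m]_q^{-s}-[m+1]_q^{-s}\bigr).
\]
Using $[m+1]_q-[m]_q=q^m$ and the mean-value estimate $|[m]_q^{-s}-[m+1]_q^{-s}|=O(|s|q^m)$ for $m$ large (the finitely many small-$m$ terms contribute $O(1)$), one obtains $|\zeta_q(s,t)|=O(|v|)$.

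\emph{Case $\Re(t)<0$.} Return to the continuation formula and split at $k=N:=\lceil-\Re(t)\rceil$. The head $\sum_{k<N}$ has $O(1)$ terms, each with $|q^{t+k}/(1-q^{t+k})|$ bounded by the hypothesis and $|\binom{s+k-1}{k}|=O(|v|^k)$ for fixed $k$ as $|v|\to\infty$, contributing at most $O(|v|^{N-1})$, polynomial in $|v|$. For the tail $\sum_{k\ge N}$ one has $|q^{t+k}/(1-q^{t+k})|\le Cq^{\Re(t)+k}$; writing $\binom{s+k-1}{k}=\Gamma(s+k)/(k!\,\Gamma(s))$ and applying Stirling's formula to both Gamma-functions (so that $|1/\Gamma(s)|\sim e^{\pi|v|/2}$ as $|v|\to\infty$), a saddle-point analysis over $k$ of order $|v|$ balances the Pochhammer growth against the $q^k$-decay and the prefactor $q^{\Re(t)}$, producing the exponent $-\Re(t)(1+\pi/2)|v|$: the $\pi/2$ comes from $|1/\Gamma(s)|$ and the $+1$ from the saddle-point value.

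The principal technical work lies in Case $\Re(t)<0$: pinning down the exact constant $1+\pi/2$ requires combining Stirling's asymptotic carefully with the saddle-point analysis of $|\Gamma(s+k)|q^k/k!$, and checking that the polynomial head is absorbed into the exponential tail for $|v|$ sufficiently large. A secondary but necessary check is that the Abel identity used in Case $\Re(t)=0$ genuinely extends by analytic continuation to the line $\Re(t)=0$.
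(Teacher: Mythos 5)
Your cases $\Re(t)>0$ and $\Re(t)=0$ are fine. The first is immediate from absolute convergence. The Abel--summation route for $\Re(t)=0$ is a legitimate and genuinely different argument from the paper's: the transformed series converges locally uniformly on $\{\Re(t)>-1,\ q^t\neq 1\}$ (since $|A_m|\ll_\varepsilon q^{(m+1)\Re(t)}$ and $|[m]_q^{-s}-[m+1]_q^{-s}|\ll_{q,\sigma}|s|q^m$), so the identity does extend to the line $\Re(t)=0$ and yields $O(|v|)$ cleanly; the paper instead gets the factor $|v|$ from a finite sum of length $N\asymp|v|$.

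The case $\Re(t)<0$ has a genuine gap. In your fixed continuation formula $\zeta_q(s,t)=(1-q)^s\sum_{k\ge0}\binom{s+k-1}{k}\,q^{t+k}/(1-q^{t+k})$, the factor $q^{t+k}/(1-q^{t+k})$ has modulus $\asymp_\varepsilon q^{\Re(t)}\cdot q^{k}$, and $q^{\Re(t)}$ is a constant in $v$; once you pass to absolute values, all the $v$-dependence sits in $\sum_{k}\bigl|\binom{s+k-1}{k}\bigr|q^{k}$. Writing $\bigl|\binom{s+k-1}{k}\bigr|=\frac{1}{k!}\prod_{j=0}^{k-1}((\sigma+j)^2+v^2)^{1/2}$, the single term at $k\approx q|v|/\sqrt{1-q^2}$ already has size $\exp\bigl((\arcsin q+o(1))|v|\bigr)$, and this is also the order of the whole sum of moduli. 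This quantity does not involve $\Re(t)$ at all, so no saddle-point evaluation of this series can output the exponent $-\Re(t)(1+\pi/2)|v|$; the attribution ``$\pi/2$ from $1/\Gamma(s)$, $+1$ from the saddle point'' does not survive the computation (the maximum over $k$ of $|\Gamma(s+k)|q^k/(k!\,|\Gamma(s)|)$ is $e^{\arcsin(q)|v|}$, not $e^{(1+\pi/2)|v|}$). Concretely, for $-\arcsin(q)/(1+\pi/2)<\Re(t)<0$ the bound your tail estimate actually delivers, $\exp(\arcsin(q)|v|)$, is strictly weaker than the theorem's claim, so the argument fails on that range.

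The missing idea is to let the truncation point grow with $|v|$. The paper uses $(1-q)^{-s}\zeta_q(s,t)=\sum_{m=1}^{N-1}q^{mt}(1-q^m)^{-s}+\sum_{r\ge0}\binom{r+s-1}{r}q^{N(t+r)}/(1-q^{t+r})$ with $N\asymp\frac{1+\pi/2}{-\log q}\,|v|$ when $\Re(t)\le0$. Then $q^{Nr}\le\exp(-(1+\pi/2)|v|)$ for $r\ge1$ crushes the $e^{|s|}e^{\pi|v|/2}$ growth of the binomial coefficients, so the infinite tail contributes only $O\bigl(q^{N\Re(t)}\bigr)$, while the finite head --- which is empty in your $N=1$ formula --- is what actually produces the stated exponent via $\sum_{m<N}q^{m\Re(t)}\asymp q^{N\Re(t)}=\exp\bigl(-\Re(t)(1+\pi/2)|v|\bigr)$.
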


If we take $t=s-1$ in the above theorem, we get the following corollary.
\begin{corollary}\label{cor}
Let $\varepsilon>0$, $q\in (0,1)$, and $s=\sigma+iv$, where $s$ is a complex number.
Fix $\varepsilon$, $q$, and $\sigma$. 
Assume that $s$ satisfies $\inf_{r\in \mathbb{Z}_{\ge -1}} |1-q^{r+s}|>\varepsilon$. 
Then, for sufficiently large $|v|$, we have 
\begin{align*}
 \zeta_q(s)
 =
 \begin{cases}
  O_{}\left( 1 \right)&(\sigma>1),\\
  O_{}\left( |v| \right)&(\sigma=1),\\
  O_{}\left( 
   \exp\left(-(\sigma-1)(1+\pi/2) \cdot |v|\right) 
  \right)
  &(\sigma<1).
 \end{cases}
\end{align*}
Here the implicit constants depend on $\epsilon$, $q$, and $\sigma$.
\end{corollary}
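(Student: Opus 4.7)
The plan is to deduce this corollary as an immediate specialization of Theorem \ref{main} with the choice $t = s - 1$. First I would observe that under this substitution $\Re(t) = \sigma - 1$, so the three cases $\Re(t) > 0$, $\Re(t) = 0$, $\Re(t) < 0$ of the theorem correspond respectively to $\sigma > 1$, $\sigma = 1$, $\sigma < 1$ of the corollary. I would also note that $\Im(t) = \Im(s) = v$, so the regime $|v| \to \infty$ is the same in both statements and the assertion ``for sufficiently large $|v|$'' carries over without change.

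Next I would check that the hypothesis of the corollary matches the theorem's assumption under this substitution. Setting $t = s-1$ and reindexing $r' = r - 1$ gives
\[
 \inf_{r \in \mathbb{Z}_{\ge 0}} |1 - q^{t+r}|
 \;=\; \inf_{r \in \mathbb{Z}_{\ge 0}} |1 - q^{s - 1 + r}|
 \;=\; \inf_{r' \in \mathbb{Z}_{\ge -1}} |1 - q^{s + r'}|,
\]
so the condition $\inf_{r \in \mathbb{Z}_{\ge -1}} |1 - q^{r+s}| > \varepsilon$ in the corollary is precisely the theorem's hypothesis applied to $t = s - 1$.

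Finally, I would verify that the implicit constants are admissible and that the explicit rate in the third case matches. Theorem \ref{main} permits dependence on $\varepsilon$, $q$, $\sigma$, and $\Re(t)$; since $\sigma$ is fixed here, $\Re(t) = \sigma - 1$ is automatically fixed as well, so the resulting constants depend only on $\varepsilon$, $q$, and $\sigma$, as claimed. Substituting $\Re(t) = \sigma - 1$ into the third-case bound $\exp(-\Re(t)(1 + \pi/2)\cdot|v|)$ produces exactly $\exp(-(\sigma - 1)(1 + \pi/2)\cdot|v|)$, while the $O(1)$ and $O(|v|)$ estimates transfer verbatim. There is no genuine obstacle to the proof of the corollary: the only point that needs a moment's care is the reindexing of the infimum, after which the three bounds follow at once from Theorem \ref{main}.
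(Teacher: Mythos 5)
Your proposal is correct and matches the paper exactly: the paper derives the corollary by the same substitution $t=s-1$ in Theorem \ref{main}, and your verification of the reindexed infimum condition and of the dependence of the implicit constants is exactly the bookkeeping that justifies it.
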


\begin{remark}
We define \(\mu_q(\sigma)\) as  
\[
\mu_q(\sigma) =\mu_{q,\varepsilon}(\sigma) 
\coloneqq 
\inf 
\left\{ 
 \xi \in \mathbb{R} \mid \zeta_q(\sigma+iv) \ll_{q,\sigma,\varepsilon} |v|^\xi, \ \inf_{r \in \mathbb{Z}_{\ge -1}} |1-q^{\sigma+r+iv}| > \varepsilon
\right\}
\]
for fixed $q\in (0,1)$ and $\varepsilon>0$.
The corollary above implies that $\mu_q(\sigma)$ satisfies
\[
\mu_q(\sigma) \le 
\begin{cases} 
0 & (\sigma > 1), \\ 
1 & (\sigma=1),\\
\infty & (\sigma<1).
\end{cases}
\]
For the classical Riemann zeta function $\zeta(s)$, assuming the Lindel\"of hypothesis, we have
\[
\mu(\sigma)=
\begin{cases} 
0 & (\sigma\ge 1/2), \\ 
1/2-\sigma & (\sigma < 1/2).
\end{cases}
\]
One possible reason for the significant gap between the estimation of $\mu_q(\sigma)$ and $\mu(\sigma)$ is that $q$ is fixed in Corollary \ref{cor}. 
As $q$ approaches $1$, it is expected that $\mu_q(\sigma)$ will converge to $\mu(\sigma)$.
\end{remark}

\section{Proof of Theorem \ref{main}}\label{sec2}
\begin{lemma}\label{lem21}
Let $q\in(0,1)$ and $N$ be a positive integer.
For any $s\in\mathbb{C}$ and $t\in\mathbb{C}\setminus(\mathbb{Z}_{\le0}+2\pi i\mathbb{Z}/\log q)$, we have
\begin{align}\label{two_parts}
 (1-q)^{-s}\zeta_q(s,t)
 &=\sum_{m=1}^{N-1} \frac{q^{mt}}{(1-q^m)^s}+\sum_{r=0}^{\infty} \binom{r+s-1}{r}\frac{q^{N(t+r)}}{1-q^{t+r}}.
\end{align}
\end{lemma}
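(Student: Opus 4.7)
The plan is to start from the definition $\zeta_q(s,t)=\sum_{m\ge 1}q^{mt}/[m]_q^s$ and rewrite the denominator using $[m]_q^s=(1-q)^{-s}(1-q^m)^s$, which immediately gives
\[
 (1-q)^{-s}\zeta_q(s,t)=\sum_{m=1}^\infty \frac{q^{mt}}{(1-q^m)^s}.
\]
By the absolute convergence established by Kaneko--Kurokawa--Wakayama (once $\Re(t)>0$; otherwise one should first argue on a strip and then invoke analytic continuation, but since only $\Re(t)>0$ matters for the application one can also assume this throughout and treat the lemma as an identity between meromorphic functions), I may split the range at $N$. The first $N-1$ terms already give the first sum in \eqref{two_parts}, so the only real work lies in the tail $\sum_{m\ge N}q^{mt}/(1-q^m)^s$.

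For that tail, since $0<q<1$ and $m\ge N\ge 1$ imply $|q^m|<1$, I would apply the generalized binomial expansion
\[
 (1-q^m)^{-s}=\sum_{r=0}^\infty \binom{r+s-1}{r}q^{mr},
\]
and substitute into the tail to obtain the double sum
\[
 \sum_{m=N}^\infty \sum_{r=0}^\infty \binom{r+s-1}{r}q^{m(t+r)}.
\]
The next step is to swap the two summations. After that, the inner sum in $m$ is a geometric series $\sum_{m\ge N}q^{m(t+r)}=q^{N(t+r)}/(1-q^{t+r})$, which is legitimate exactly because the hypothesis $t\notin \mathbb{Z}_{\le 0}+2\pi i\mathbb{Z}/\log q$ guarantees $q^{t+r}\ne 1$ for every $r\ge 0$ (since $q^{t+r}=1$ would force $t\in -r+2\pi i\mathbb{Z}/\log q\subset \mathbb{Z}_{\le 0}+2\pi i\mathbb{Z}/\log q$).

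The one step that requires genuine care is the interchange of the two summations. To justify it by Fubini/Tonelli, I would estimate
\[
 \sum_{m=N}^\infty \sum_{r=0}^\infty \left|\binom{r+s-1}{r}\right|q^{m(\Re(t)+r)}
 =\sum_{m=N}^\infty q^{m\Re(t)}\sum_{r=0}^\infty \left|\binom{r+s-1}{r}\right|q^{mr},
\]
bound the inner sum using the fact that $\bigl|\binom{r+s-1}{r}\bigr|\ll_s r^{\Re(s)-1}$ as $r\to\infty$ (so the series is dominated by $(1-q^m)^{-(\Re(s)+1)}$ up to a constant, and in particular is finite for each $m\ge 1$), and then confirm that the resulting sum over $m\ge N$ converges when $\Re(t)$ is in the original region of absolute convergence. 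This absolute bound licenses the interchange, after which the geometric summation produces the second sum in \eqref{two_parts}, completing the identity; the general $t$ follows by analytic continuation since both sides of \eqref{two_parts} are meromorphic in $t$ with poles only on the excluded set.
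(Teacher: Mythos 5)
Your proposal is correct and follows essentially the same route as the paper: split the series at $m=N$, expand $(1-q^m)^{-s}$ by the generalized binomial series for the tail, interchange the two sums, evaluate the geometric series in $m$, and extend to general $t$ by meromorphic continuation. The only difference is that you spell out the Fubini justification, which the paper leaves implicit.
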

\begin{proof}
This lemma is obtained similarly as in \cite{KanekoKurokawaWakayama2003}*{PROPOSITION 1}.
The second term of the right-hand side is provided as
\begin{align*}
 \sum_{m=N}^{\infty} \frac{q^{mt}}{(1-q^m)^s}
 &=\sum_{m=N}^{\infty}\sum_{r=0}^{\infty} \binom{-s}{r}(-1)^rq^{m(t+r)}\\
 &=\sum_{r=0}^{\infty} \binom{r+s-1}{r}\sum_{m=N}^{\infty}q^{m(t+r)}\\
 &=\sum_{r=0}^{\infty} \binom{r+s-1}{r}\frac{q^{N(t+r)}}{1-q^{t+r}}
\end{align*}
for $\Re(t)>0$, and this series can be continued meromorphically to $\mathbb{C}^2$.
\end{proof}

\begin{lemma} \label{lem22}
Let $s=\sigma+iv$ and $r\in\mathbb{Z}_{\ge1}$. 
For fixed $\sigma$, if $|v|$ is sufficiently large, we have
\begin{align*}
 \binom{r+s-1}{r}
 &\ll_\sigma \frac{ |r+v|^{\sigma-1/2} e^{|\sigma+iv|} }{r^{1/2}|v|^{\sigma-1/2}}
 \exp\left(\frac{\pi}{2}|v|\right),
\end{align*}
where the implicit constant does not depend on $r$ and $v$.
\end{lemma}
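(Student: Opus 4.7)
The approach is to apply Stirling's formula
\[
\log\Gamma(z) = (z-\tfrac12)\log z - z + \tfrac12\log(2\pi) + O(1/|z|)
\]
to each of the three Gamma factors in the identity $\binom{r+s-1}{r} = \Gamma(r+s)/(\Gamma(s)\Gamma(r+1))$, and then take absolute values using $|z^w| = |z|^{\Re w}e^{-\Im w\cdot\arg z}$. After the $e^{-z}$ contributions cancel up to an $O(1)$ factor, this yields
\[
\left|\binom{r+s-1}{r}\right| \asymp_\sigma \frac{|r+s|^{r+\sigma-1/2}}{|s|^{\sigma-1/2}(r+1)^{r+1/2}} \exp\bigl(v(\arg s - \arg(r+s))\bigr).
\]

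I would then replace $|s|$ by $|v|$, $|r+s|^{\sigma-1/2}$ by $|r+iv|^{\sigma-1/2}$, and $(r+1)^{r+1/2}$ by $r^{r+1/2}$, each incurring only a $\sigma$-dependent bounded factor, so as to extract the shape $|r+iv|^{\sigma-1/2}/(r^{1/2}|v|^{\sigma-1/2})$ appearing in the lemma. It then suffices to show
\[
\left(\frac{|r+s|}{r}\right)^r \exp\bigl(v(\arg s - \arg(r+s))\bigr) \ll e^{|\sigma+iv|+\pi|v|/2}.
\]
Assuming $v>0$ (the case $v<0$ is symmetric), I would substitute $u \coloneqq (r+\sigma)/|v|$, so that $|r+s|=|v|\sqrt{1+u^2}$. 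Using $\arg s = \pi/2 - \arctan(\sigma/v)$ and $\arg(r+s) = \pi/2 - \arctan((r+\sigma)/v)$ (valid for either sign of $r+\sigma$), a direct computation gives $v(\arg s - \arg(r+s)) = |v|\arctan u + O_\sigma(1)$, and an analogous expansion yields $r\log(|r+s|/r) = (|v|u/2)\log(1+1/u^2) + O_\sigma(\log|v|)$. The combined exponent thus equals $|v|h(u)+O_\sigma(\log|v|)$, where
\[
h(u) \coloneqq \tfrac{u}{2}\log(1+1/u^2) + \arctan u.
\]

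The crux is then the uniform bound $h(u)\le\pi/2$. A clean differentiation gives $h'(u)=\tfrac12\log(1+1/u^2)>0$, so $h$ is strictly increasing on $(0,\infty)$ with $h(0^+)=0$ and $\lim_{u\to\infty}h(u)=\pi/2$; for $u\le 0$ one has $h(u)\le 0$ since $h$ is odd. Hence $\exp(|v|h(u))\le e^{\pi|v|/2}$, and the $e^{O_\sigma(\log|v|)}$ polynomial slack is absorbed into the factor $e^{|\sigma+iv|}\asymp e^{|v|}$ on the right-hand side. The main obstacle I anticipate is carefully tracking this $\log|v|$ error across the regime $u\ll 1$ (that is, $r\ll|v|$, where $\log(1+1/u^2)\sim 2\log(|v|/r)$ can be as large as $2\log|v|$); the extra $e^{|\sigma+iv|}$ factor in the statement is precisely what furnishes the room to carry these errors.
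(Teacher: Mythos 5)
Your proof is correct, and although it opens the same way as the paper's (Stirling applied to $\Gamma(r+s)/(\Gamma(s)\,r!)$, reducing matters to the factor $(|r+s|/r)^r\exp\bigl(v(\arg s-\arg(r+s))\bigr)$), it handles the core estimate by a genuinely different and sharper route. The paper bounds the two pieces independently: $(|r+s|/r)^r\le(1+|s|/r)^r\le e^{|s|}$ on one hand, and $|\arg s-\arg(r+s)|\le\pi/2+o(1)$ on the other, then multiplies the results; this is exactly why both factors $e^{|\sigma+iv|}$ and $e^{\pi|v|/2}$ appear in the statement. You instead set $u=(r+\sigma)/|v|$ and show the combined exponent is $|v|h(u)+O_\sigma(\log|v|)$ with $h(u)=\tfrac{u}{2}\log(1+1/u^2)+\arctan u$, and the identity $h'(u)=\tfrac12\log(1+1/u^2)$ gives the uniform bound $h\le\pi/2$. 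I checked the derivative computation and the error terms --- including the regimes $r\ll|v|$ and $r+\sigma\le0$, where the errors are indeed only $O_\sigma(\log|v|)$ uniformly in $r$ --- and the argument is sound; the polynomial slack is comfortably absorbed by $e^{|\sigma+iv|}\ge e^{|v|}$. What your route buys is the observation that the factor $e^{|\sigma+iv|}e^{\pi|v|/2}\approx e^{(1+\pi/2)|v|}$ in the lemma is really only $|v|^{O_\sigma(1)}e^{\pi|v|/2}$: the paper's two crude bounds are each tight in opposite regimes ($r\gg|v|$ versus $r\ll|v|$) but never simultaneously, which is precisely the slack your function $h$ exposes. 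Carried through Lemma \ref{lem23} and the choice of $N$, this would improve the constant $1+\pi/2$ in Theorem \ref{main} and Corollary \ref{cor} to $\pi/2$ (up to $\varepsilon$). One cosmetic point: you write $|r+iv|^{\sigma-1/2}$ where the lemma says $|r+v|^{\sigma-1/2}$; your reading is the correct one, since the paper's own proof derives this factor from $|\sigma+r+iv|^{\sigma-1/2}$ and the literal $|r+v|$ would vanish when $v<0$ and $r=-v$.
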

\begin{proof}
By Stirling's formula, we have
\begin{align*}
 \binom{r+s-1}{r}&=\frac{\Gamma(r+s)}{r!\Gamma(s)}\\
 &\asymp\frac{|(r+s)^{r+s-1/2}e^{-r-s}|}{r!|s^{s-1/2}e^{-s}|}\\
 &\asymp\frac{|\sigma+r+iv|^{\sigma+r-1/2}\exp(-\arg(r+s)v)e^{-r-\sigma}}{r!|\sigma+iv|^{\sigma-1/2}\exp(-\arg(s)v)e^{-\sigma}},
\end{align*}
where the argument is considered within the range from $-\pi$ to $\pi$.
For any $r\ge1$, we have 
\begin{align*}
 \exp(-\arg(r+s)v+\arg(s)v)
 \ll_\sigma \exp\left(\frac{\pi}{2}|v|\right).
\end{align*}
Using the asymptotic formula 
$r!\asymp r^{r+1/2} e^{-r}$,
we find that
\begin{align*}
 \binom{r+s-1}{r}
 &\ll_\sigma \frac{ e^{-r} |\sigma+r+iv|^{r} |r+v|^{\sigma-1/2} }{r!|v|^{\sigma-1/2}}
 \exp\left(\frac{\pi}{2}|v|\right)\\
 &\ll_\sigma \frac{ |\sigma+r+iv|^{r} |r+v|^{\sigma-1/2} }{r^{r+1/2}|v|^{\sigma-1/2}}
 \exp\left(\frac{\pi}{2}|v|\right).
\end{align*}
Next, observing that
\begin{align*}
 \frac{ |\sigma+r+iv|^{r} }{ r^{r} }
 \le \left(1+ \frac{|\sigma+iv|}{r} \right)^{r}
 \le e^{|\sigma+iv|},
\end{align*}
we deduce
\begin{align*}
 \binom{r+s-1}{r}
 &\ll_\sigma \frac{ |r+v|^{\sigma-1/2} e^{|\sigma+iv|} }{r^{1/2}|v|^{\sigma-1/2}}
 \exp\left(\frac{\pi}{2}|v|\right).
\end{align*}
This completes the proof.
\end{proof}

Now, we estimate the second term of Lemma \ref{lem21}.
\begin{lemma} \label{lem23}
Let $q\in(0,1)$, $N$ be a positive integer, $s=\sigma+iv$, and 
$t\in\mathbb{C}\setminus(\mathbb{Z}_{\le0}+2\pi i\mathbb{Z}/\log q)$.
For fixed $q$ and $\sigma$, if $|v|$ is sufficiently large, we have
\begin{align*}
 &\sum_{r=0}^{\infty} 
 \binom{r+s-1}{r}
 \frac{q^{N(t+r)}}{1-q^{t+r}}\\
 &\ll_{\sigma}
  \frac{q^{N\Re(t)}}{|1-q^{t}|}
  +\frac{q^{N\Re(t)} 
  e^{|\sigma+iv|}}{\inf_{r\in\mathbb{Z}_{\ge1}}|1-q^{t+r}|}
  \exp\left(\frac{\pi}{2}|v|\right)
 \cdot
 \left\{
  \frac{q^{N}}{1-q^{N}}
  +
  |v|^{-1/2}q^{N|v|}
  \left(
   1+\frac{1}{-N\log q}
  \right)
 \right\}.
\end{align*}
\end{lemma}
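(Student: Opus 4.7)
The plan is to separate the $r=0$ term from the tail $r\ge 1$, bound the single term directly, and apply Lemma \ref{lem22} to the rest. The $r=0$ summand is $q^{Nt}/(1-q^t)$, whose modulus $q^{N\Re(t)}/|1-q^t|$ is exactly the first term of the stated bound. For the tail, I would substitute Lemma \ref{lem22}, use $|q^{N(t+r)}|=q^{N\Re(t)}q^{Nr}$ and $|1-q^{t+r}|^{-1}\le 1/\inf_{r\ge 1}|1-q^{t+r}|$, and pull out the prefactor $q^{N\Re(t)}e^{|\sigma+iv|}\exp(\pi|v|/2)/\inf_{r\ge 1}|1-q^{t+r}|$. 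Reading $|r+v|$ in Lemma \ref{lem22} as $r+|v|$ (justified by $\sqrt{(\sigma+r)^2+v^2}\asymp_\sigma r+|v|$ for $r\ge 1$ and large $|v|$), the remaining task reduces to bounding
\[
 S\coloneqq\sum_{r=1}^\infty \frac{(r+|v|)^{\sigma-1/2}}{r^{1/2}\,|v|^{\sigma-1/2}}\,q^{Nr}
 \ll \frac{q^N}{1-q^N} + |v|^{-1/2}q^{N|v|}\!\left(1+\tfrac{1}{-N\log q}\right).
\]

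To obtain this, I would split $S$ at $r=\lfloor|v|\rfloor$ into a head $S_1$ and a tail $S_2$. In $S_1$, the factor $((r+|v|)/|v|)^{\sigma-1/2}$ lies in a $\sigma$-dependent bounded interval (since $(r+|v|)/|v|\in[1,2]$), so $S_1\ll_\sigma \sum_{r\ge 1}r^{-1/2}q^{Nr}\le q^N/(1-q^N)$, giving the first bracketed term. For $S_2$, when $\sigma\le 1/2$ the same ratio is $\le 1$ and $S_2\le |v|^{-1/2}\sum_{r>|v|}q^{Nr}\le |v|^{-1/2}q^{N|v|}/(1-q^N)$; when $\sigma>1/2$, I would bound $(r+|v|)^{\sigma-1/2}\le(2r)^{\sigma-1/2}$ and use the integral comparison $\sum_{r>|v|}r^{\sigma-1}q^{Nr}\ll \int_{|v|}^\infty x^{\sigma-1}q^{Nx}\,dx$, which after the substitution $u=-Nx\log q$ becomes an incomplete gamma integral evaluating to $\ll_\sigma |v|^{\sigma-1}q^{N|v|}/(-N\log q)$, whence $S_2\ll_\sigma |v|^{-1/2}q^{N|v|}/(-N\log q)$. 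The elementary inequality $1/(1-q^N)=1+q^N/(1-q^N)\le 1+1/(-N\log q)$ (from $q^{-N}-1\ge -N\log q$) then absorbs both subcases into the stated form.

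The main obstacle is the $\sigma>1/2$ branch of $S_2$: the factor $((r+|v|)/|v|)^{\sigma-1/2}$ is increasing in $r$, so the crude estimate $r^{-1/2}\le|v|^{-1/2}$ no longer suffices, and one must extract sharp constants from the incomplete-gamma integral to preserve the $q^{N|v|}$ decay while paying only a $1/(-N\log q)$ premium. The remaining ingredients---geometric summation, uniform ratio bounds on $[1,|v|]$, and the comparison $1/(1-q^N)\le 1+1/(-N\log q)$---are routine bookkeeping.
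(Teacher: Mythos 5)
Your argument is essentially the paper's proof: isolate the $r=0$ term, insert Lemma \ref{lem22} for $r\ge1$, split the remaining sum at $r=|v|$, bound the head by the geometric series $q^N/(1-q^N)$, and control the tail by comparison with $\int_{|v|}^\infty x^{\sigma-1}q^{Nx}\,dx$; the paper evaluates that integral by integration by parts rather than via the incomplete gamma function, and handles both signs of $\sigma-1/2$ uniformly through $(r+|v|)^{\sigma-1/2}r^{-1/2}\ll_\sigma r^{\sigma-1}$ instead of your two-case split, but these are cosmetic differences. One small correction: the step $\sum_{r>|v|}r^{\sigma-1}q^{Nr}\ll\int_{|v|}^\infty x^{\sigma-1}q^{Nx}\,dx$ is not uniform in $N$ --- the integral is $\asymp_\sigma |v|^{\sigma-1}q^{N|v|}/(-N\log q)$, so once $-N\log q$ is large (exactly the regime of application, where $N\asymp|v|$) the first summand, of size $\asymp|v|^{\sigma-1}q^{N|v|}$, dominates the integral and must be retained separately, as the paper does by writing $\sum_{r>|v|}r^{\sigma-1}q^{Nr}\le|v|^{\sigma-1}q^{N|v|}+\int_{|v|}^\infty x^{\sigma-1}q^{Nx}\,dx$. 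This costs nothing, because the target bound already carries the corresponding ``$1+$'' inside the final parenthesis; it is only your intermediate claim $S_2\ll_\sigma|v|^{-1/2}q^{N|v|}/(-N\log q)$ in the $\sigma>1/2$ branch that fails, not the lemma. (You should also note, as the paper does, that ``$|v|$ sufficiently large'' can be chosen independently of $N$, which is what makes the monotonicity and incomplete-gamma estimates legitimate uniformly in $N$.)
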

\begin{proof}
From Lemma \ref{lem22}, we have
\begin{align*}
 &\sum_{r=0}^{\infty} \binom{r+s-1}{r}\frac{q^{N(t+r)}}{1-q^{t+r}}\\
 &\ll_\sigma 
 \left|\frac{q^{Nt}}{1-q^{t}}\right|
 +\sum_{r=1}^{\infty} 
 \left|\frac{q^{N(t+r)}}{1-q^{t+r}}\right|
 \frac{ |r+v|^{\sigma-1/2} e^{|\sigma+iv|} }{r^{1/2}|v|^{\sigma-1/2}}
 \exp\left(\frac{\pi}{2}|v|\right)\\
 &\ll_{\sigma} 
 \frac{q^{N\Re(t)}}{|1-q^{t}|}
 +\frac{q^{N\Re(t)} e^{|\sigma+iv|}}{\inf_{r\in\mathbb{Z}_{\ge1}}|1-q^{t+r}|} 
 \exp\left(\frac{\pi}{2}|v|\right)
 \left\{
  \sum_{r\le |v|}
  \frac{ q^{Nr} }{ r^{1/2} }
  +
  \frac{1}{|v|^{\sigma-1/2}}
  \sum_{r>|v|}
  r^{\sigma-1} q^{Nr}
 \right\}.
\end{align*}
Here, 
\begin{align*}
 \sum_{r<|v|}
 \frac{ q^{Nr} }{ r^{1/2} }
 \le
 \sum_{r=1}^{\infty}
 q^{Nr}
 =\frac{q^{N}}{1-q^{N}}.
\end{align*}
For sufficiently large $|v|$, we have 
\begin{align*}
 \sum_{r>|v|}
 r^{\sigma-1} q^{Nr}
 &\le
 |v|^{\sigma-1} q^{N|v|}
 +\int_{|v|}^\infty
 x^{\sigma-1} q^{Nx} \,dx\\
 &=
 |v|^{\sigma-1} q^{N|v|}
 +\frac{1}{-N\log q} |v|^{\sigma-1} q^{N|v|}
 +(\sigma-1)
 \int_{|v|}^\infty
 \frac{x^{\sigma-2} q^{Nx}}{-N\log q} \,dx.
\end{align*}
Note that we can take sufficiently large $|v|$ which does not depend on $N$.
Since
\begin{align*}
 \int_{|v|}^\infty
 x^{\sigma-2} q^{Nx} \,dx
 \le
 |v|^{\sigma} q^{N|v|} 
 \int_{|v|}^\infty
 x^{-2} \,dx 
 =|v|^{\sigma-1} q^{N|v|}, 
\end{align*}
we have
\begin{align*}
 \sum_{r>|v|}
 r^{\sigma-1} q^{Nr}
 &\ll_\sigma
 |v|^{\sigma-1} q^{N|v|}
 \left(
  1+\frac{1}{-N\log q}
 \right).
\end{align*}
This finishes the proof. 
\end{proof}

We set
\begin{align*}
 N
 =\left\lfloor 
 -\frac{1+\pi/2}{\max\{\Re(t),0\}+1} \cdot \frac{|v|}{\log q}
 \right\rfloor.
\end{align*}
Note that
\begin{align*}
 q^N
 \asymp_{q}
 \exp\left(-\frac{1+\pi/2}{\max\{\Re(t),0\}+1} \cdot |v|\right).
\end{align*}

\begin{proof}[Proof of Theorem \ref{main}]
The first term of \eqref{two_parts} can be estimated as
\begin{align*}
 \sum_{m=1}^{N-1} \frac{q^{mt}}{(1-q^m)^s}
 &=O_{}\left(\max\{(1-q)^{-\sigma},(1-q^{N-1})^{-\sigma}\}
  \sum_{m=1}^{N-1} q^{m\Re(t)}\right)\\
 &=\max\{(1-q)^{-\sigma},(1-q^{N-1})^{-\sigma}\}\times\begin{cases}
  O_{q,\Re(t)}\left( 1 \right)&(\Re(t)>0),\\
  O_{}\left( N \right)&(\Re(t)=0),\\
  O_{q,\Re(t)}\left( q^{N\Re(t)}  \right)&(\Re(t)<0)
 \end{cases}\\
 &=\begin{cases}
  O_{\sigma,q,\Re(t)}\left( 1 \right)&(\Re(t)>0),\\
  O_{\sigma,q}\left( |v| \right)&(\Re(t)=0),\\
  O_{\sigma,q,\Re(t)}\left( \exp\left(-\Re(t)(1+\pi/2) \cdot |v|\right)  \right)&(\Re(t)<0).
 \end{cases}
\end{align*}
From Lemma \ref{lem23}, the second term of \eqref{two_parts} can be estimated as
\begin{align*}
 &\sum_{r=0}^{\infty} \binom{r+s-1}{r}\frac{q^{N(t+r)}}{1-q^{t+r}}\\
 &
 \ll_{\sigma,q,\Re(t),\varepsilon}
 \exp
   \left(
    -(1+\pi/2)
    \frac{\Re(t)}{\max\{\Re(t),0\}+1}
    \cdot |v|
   \right)\\
 &\quad
  +\exp
   \left(
    -(1+\pi/2)
    \frac{\Re(t)}{\max\{\Re(t),0\}+1}
    \cdot |v|
   \right)
  e^{|v|} 
  \exp\left(\frac{\pi}{2}|v|\right)
 \cdot
  \exp\left(-\frac{1+\pi/2}{\max\{\Re(t),0\}+1} \cdot |v|\right)\\
 &\ll_{\sigma,q,\Re(t),\varepsilon}
  \exp
   \left(
    \frac{1+\pi/2}{\max\{\Re(t),0\}+1}
    (\max\{\Re(t),0\}-\Re(t))
    \cdot |v|
    \right).
\end{align*}
From Lemma \ref{lem21}, this finishes the proof.
\end{proof}

\section*{Acknowledgments}
This work was supported by JSPS KAKENHI Grant Numbers JP22K13897 (Murahara) and JP19K14511 (Onozuka). 

\bibliographystyle{amsalpha}
\bibliography{References} 
\end{document}